\documentclass[reqno,11pt]{amsart}
\usepackage{amsmath,amssymb,amsthm,mathrsfs}
\usepackage{epsfig,color}
\usepackage[latin1]{inputenc}
\usepackage[english]{babel}
\usepackage[numbers]{natbib}
\usepackage[colorlinks, citecolor=blue, linkcolor=red]{hyperref}


\voffset=-1.5cm \textheight=23cm \hoffset=-.5cm \textwidth=16cm
\oddsidemargin=1cm \evensidemargin=-.1cm
\footskip=35pt
\parindent=20pt

\numberwithin{equation}{section}

\def\00{{\bf 0}}

\def\op{\overline{\partial}_t}

\def\SS{\mathbb S}
\def\RR{\mathbb R}


\def\T{\mathcal{T}}

\newcommand{\eps}{{\varepsilon}}

\def\gt{\widetilde{g}}



\newtheorem*{theorem*}{Theorem}
\newtheorem{theorem}{Theorem}[section]
\newtheorem{lemma}[theorem]{Lemma}

\newtheorem{corollary}[theorem]{Corollary}
\newtheorem{remark}[theorem]{Remark}

\newtheorem{definition}[theorem]{Definition}

 \begin{document}
    \title[A closure result for globally hyperbolic spacetimes]{A closure result for globally hyperbolic spacetimes}
  \date{}

\author{Giovanni Catino and Alberto Roncoroni}

\address{G. Catino, Dipartimento di Matematica, Politecnico di Milano, Piazza Leonardo da Vinci 32, 20133, Milano, Italy.}
\email{giovanni.catino@polimi.it}

\address{A. Roncoroni, Dipartimento di Matematica, Politecnico di Milano, Piazza Leonardo da Vinci 32, 20133, Milano, Italy.}
\email{alberto.roncoroni@polimi.it}

\begin{abstract}
In this paper we prove a closure result for globally hyperbolic spacetimes satisfying, at a certain time, natural assumptions on the deceleration, the pressure and the Hubble constant. The main tool that we use is a general  Bonnet-Myers type result.
\end{abstract}

\maketitle

\begin{center}

\noindent{\it Key Words: Bonnet-Myers type theorem, cosmology, topology}

\medskip

\centerline{\bf AMS subject classification: 53C21, 83C05}

\end{center}

\

\

\section{Introduction}

An interesting and fascinating question in cosmology is the following: is the universe closed (i.e. compact and without boundary) or not and what topology does it have? The picture is completely clear in the context of the standard model of cosmology given by the Friedmann-Lemaitre-Robertson-Walker (FLRW) spacetime  solutions of the Einstein equation. Indeed, in this case, the topology of the three-dimensional spatial manifold is fixed \emph{a priori} since it has constant sectional curvature $K=-1,0,1$. In particular it is closed provided $K=1$. Hence,  in the FLRW cosmologies the closure question is trivial. We mention that a recent analysis in \cite{div} suggests a closed universe. Motivated by this and the recent results in \cite{gall} where the authors address these questions using a standard tool in differential geometry known as Bonnet-Myers theorem, in this paper we firstly prove a Bonnet-Myers type result and then we apply it to show a closure result for general globally hyperbolic spacetimes. 

\subsection{A Bonnet-Myers type result} 
We first recall that the classical Bonnet-Myers theorem states the following: given a complete Riemannian manifold $(M^n,g)$, $n\geq 2$, whose Ricci curvature satisfies 
\begin{equation}\label{Ric}
\mathrm{Ric}\geq (n-1)\lambda g\, , 
\end{equation}
for some $\lambda>0$. Then 
\begin{equation}\label{eq-d}
\mathrm{diam}(M^n,g)\leq \frac{\pi}{\sqrt{\lambda}}\, . 
\end{equation}
In particular $M^n$ is closed and has finite fundamental group (see e.g. \cite[Theorem 6.3.3]{Pet}). The Bonnet-Myers theorem has been investigated a lot by the Riemannian geometry community; in particular a similar result turns out to be true if one replaces the Ricci curvature by the so-called \emph{$m-$Bakry-\'Emery Ricci tensor:}
$$
\mathrm{Ric}^m_f:=\mathrm{Ric}+ \nabla^2 f-\frac{1}{m}df\otimes df \, ,
$$
where $m>0$ and $f:M\rightarrow\mathbb{R}$ is a smooth function called the \emph{potential}. Indeed, given a complete Riemannian manifold $(M^n,g)$, $n\geq 2$ whose $m-$Bakry-\'Emery Ricci tensor satisfies 
\begin{equation}\label{BER}
\mathrm{Ric}^m_f\geq \left(n+m-1\right)\lambda g\, , 
\end{equation}
for some $\lambda>0$, then \eqref{eq-d} holds (see \cite{qia} and also \cite{bq,l,ww}). If one takes $f=-\log u$, for some smooth and positive function $u:M\rightarrow\mathbb{R}$, \eqref{BER} reads as follows
$$
\mathrm{Ric}\geq \frac{\nabla^2 u}{u}+\left(\frac{1}{m}-1\right) \frac{du\otimes du}{u^2}+\left(n+m-1\right)\lambda g\,.
$$
It is well known that, if the Ricci  (or the $m$-Bakry-Emery Ricci) tensor is not uniformly positive, the closedness of the manifold is not guaranteed. However, our first result shows that this is not the case if the potential $u$ is a positive supersolution to a suitable elliptic PDE. More precisely, we prove the following:
\begin{theorem}\label{teoA}
Let $(M^n,g)$, $n\geq 2$, be a complete Riemannian manifold such that 
\begin{equation}\label{Ric}
\mathrm{Ric}\geq \alpha \frac{\nabla^2 u}{u} + \beta \frac{du\otimes du}{u^2} + \mathcal{Q}  \quad\text{ in } M\, ,
\end{equation}
where $\alpha,\beta\in\mathbb{R}$, $\mathcal{Q}$ is a symmetric two tensor and $u\in C^{\infty}(M)$ satisfies 
\begin{equation}\label{u}
u>0,\quad -\Delta u\geq V u + \gamma \frac{\vert\nabla u\vert^2}{u}\quad\text{ in } M \, ,
\end{equation}
where $\gamma\in\mathbb{R}$, $V\in C^{\infty}(M)$. Assume that, there exists $k\in\mathbb{R}$ such that 
\begin{equation}\label{F}
\mathcal{Q}+ kVg \geq (n-1)\lambda g \, ,
\end{equation}
for some $\lambda>0$,
\begin{equation}\label{hp1}
k\left(\gamma+1-\alpha\right)\geq 0
\end{equation}
and 
\begin{equation}\label{hp2}
\alpha+\beta+k(\gamma+1)-(n-1)\frac{k^2}{4}>0\,.
\end{equation}
Then $M^n$ is closed, has finite fundamental group and its diameter satisfies
$$
\mathrm{diam}(M^n,g)\leq \pi\sqrt{\frac{1}{\lambda}\left(1+\frac{\left[2\alpha-k(n-3)\right]^2}{4(n-1)\left[\alpha+\beta+k(\gamma+1)-(n-1)\frac{k^2}{4}\right]}\right)}\, .
$$
Finally, if in addition $V\geq 0$ and $\gamma\geq 0$, then $V\equiv 0$ on $M^n$.
\end{theorem}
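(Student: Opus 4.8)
My plan is to extract, from \eqref{Ric}--\eqref{hp2}, a uniform lower bound of the form $\mathrm{Ric}^{m}_{f}\ge (n+m-1)\,\tilde\lambda\, g$ for a suitable potential $f$, effective dimension $m>0$ and $\tilde\lambda>0$ — after a conformal change of metric when $k\ne 0$ — and then to quote the Qian-type Bonnet--Myers theorem recalled after \eqref{BER}. Compactness and finiteness of $\pi_1$ then follow in the usual way, the latter by running the same argument on the Riemannian universal cover, onto which all the hypotheses lift with the same constants; the explicit diameter bound comes from tracking $\tilde\lambda$ and $m$.

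Concretely, I would first record the two elementary consequences of the hypotheses: \eqref{u} gives $-\Delta u/u\ge V+\gamma\,|\nabla u|^{2}/u^{2}$, and \eqref{F} gives $\mathcal Q\ge\bigl((n-1)\lambda-kV\bigr)g$ as a tensor inequality. Next I introduce the conformal metric $\tilde g=u^{2k}g$ together with a weight $e^{-h}\,d\mathrm{vol}_{\tilde g}$, $h$ a multiple of $\log u$, and an effective dimension $m$. Expanding the conformal transformation laws for $\mathrm{Ric}$ and for the Hessian, I fix $h$ so that the term proportional to $\nabla^{2}u/u$ cancels against the one in \eqref{Ric}; the conformal change unavoidably produces a term $-k\,(\Delta u/u)\,g$, and here \eqref{u} is used (for $k\ge 0$) to convert it into $kVg+k\gamma\,(|\nabla u|^{2}/u^{2})\,g$, after which \eqref{F} turns $\mathcal Q+kVg$ into $(n-1)\lambda\,g$ — this is exactly what forces the conformal exponent to be $k$. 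The upshot is a bound
\[
\mathrm{Ric}^{m}_{\tilde g,h}\;\ge\;\Bigl(B-\tfrac{D^{2}}{m}\Bigr)\,\frac{du\otimes du}{u^{2}}\;+\;k(\gamma+1-\alpha)\,\frac{|\nabla u|^{2}}{u^{2}}\,g\;+\;(n-1)\lambda\,g ,
\]
with $B$ and $D$ explicit in $\alpha,\beta,\gamma,k,n$; by \eqref{hp1} the middle term is nonnegative and is discarded, and \eqref{hp2} asserts $B>0$, so the optimal choice $m=D^{2}/B$ annihilates the remaining coefficient and leaves $\mathrm{Ric}^{m}_{\tilde g,h}\ge (n-1)\lambda\,g=(n+m-1)\,\tilde\lambda\,\tilde g$ with $\tilde\lambda>0$. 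Carrying the factor $n+m-1=(n-1)\bigl(1+D^{2}/((n-1)B)\bigr)$ through Qian's estimate then produces precisely the claimed diameter bound once one identifies $D^{2}=[2\alpha-k(n-3)]^{2}/4$ and $B=\alpha+\beta+k(\gamma+1)-(n-1)k^{2}/4$; when $k=0$ no conformal change is needed at all — $f=-\alpha\log u$ and $m=\alpha^{2}/(\alpha+\beta)$ already do the job.

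The step I expect to cost the most work is precisely this bookkeeping yielding the \emph{sharp} constant: computing $B$ and $D$ correctly from the conformal/weighted-Bochner identities and verifying they coincide with the expressions above. A secondary subtlety is that the bound just obtained is $(n-1)\lambda\,g$, which against $\tilde g$ reads $(n-1)\lambda\,u^{-2k}\tilde g$ with a non-constant weight, so either one first shows $u$ is bounded — a Fischer-Colbrie--Schoen/Barta inequality of the type $\int_{M}V\varphi^{2}\le\int_{M}|\nabla\varphi|^{2}$, valid for $\varphi\in C_{c}^{\infty}(M)$ by positivity of the supersolution $u$ in \eqref{u}, is the convenient tool here — or one runs the Riccati comparison directly in $(M,g)$ for an appropriately modified mean curvature so the weight never enters; the case $k<0$ is handled by the symmetric choice of exponents. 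Finally, the last assertion is immediate once $M$ is closed: if $V\ge 0$ and $\gamma\ge 0$ then \eqref{u} gives $-\Delta u\ge 0$, so $u$ is superharmonic on a closed manifold and hence constant; substituting $\nabla u\equiv 0$, $\Delta u\equiv 0$ into \eqref{u} gives $Vu\le 0$, and since $u>0$ this forces $V\equiv 0$.
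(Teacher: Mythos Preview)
Your conformal change $\tilde g=u^{2k}g$ is exactly the one the paper uses, and your identification of the constants $B=\alpha+\beta+k(\gamma+1)-(n-1)k^{2}/4$ and $D^{2}=[2\alpha-k(n-3)]^{2}/4$ is correct. But the plan to package the outcome as a \emph{uniform} bound $\mathrm{Ric}^{m}_{\tilde g,h}\ge (n+m-1)\tilde\lambda\,\tilde g$ and then quote Qian's theorem has a real obstruction --- the one you already flag as a ``secondary subtlety'', but which is in fact the main difficulty. The bound you actually obtain is $(n-1)\lambda\,g=(n-1)\lambda\,u^{-2k}\tilde g$, not a constant multiple of $\tilde g$. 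Neither fix you sketch recovers the sharp constant: even if a Barta/FCS argument gave two-sided bounds on $u$ (it does not, in general), you would only get $\mathrm{Ric}^{m}_{\tilde g,h}\ge c\,\tilde g$ with $c$ depending on $\sup u$ and $\inf u$, and the diameter would be for $(M,\tilde g)$, not $(M,g)$. And ``run the Riccati comparison directly so the weight never enters'' is not a fix of the Qian route but a different proof altogether --- which is in fact what the paper does. There is also a gap you do not mention: $(M,\tilde g)$ need not be complete (nothing prevents $u\to 0$ at infinity), so a black-box application of Qian is not available.

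The paper sidesteps both issues at once, never seeking a global Bakry--\'Emery bound. Fixing $o\in M$ and $\rho>0$, it uses Fischer--Colbrie's trick (replace $u$ by $u+\eta_\rho$ with $\eta_\rho\equiv 1$ outside $B_{\rho+1}$, $\eta_\rho\equiv 0$ on $B_\rho$) to get a complete conformal metric agreeing with $\tilde g$ on $B_\rho$, hence a $\tilde g$-minimizing geodesic $\tilde\gamma\subset B_\rho$ from $o$ to $\partial B_\rho$. The second variation inequality for $\tilde\gamma$,
\[
\int_0^{\tilde l}\bigl[(n-1)\varphi_{\tilde s}^{2}-\tilde R_{11}\varphi^{2}\bigr]\,d\tilde s\ \ge\ 0,
\]
is then rewritten in the $g$-arclength ($d\tilde s=u^{k}ds$) using the conformal formula for $\tilde R_{11}$ together with \eqref{Ric}, \eqref{u}, \eqref{F}, \eqref{hp1}; every term carries a factor $u^{-k}$. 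The decisive move --- and the step missing from your outline --- is the substitution $\varphi=u^{k/2}\psi$: this cancels \emph{all} residual powers of $u$ and, after one use of Young's inequality (here \eqref{hp2} enters), leaves the clean one-dimensional inequality $A\int_0^{l}\psi_s^{2}\,ds\ge B\int_0^{l}\psi^{2}\,ds$ with $A$, $B$ precisely the constants in the statement. Taking $\psi(s)=\sin(\pi s/l)$ gives $l\le\pi\sqrt{A/B}$, which bounds the $g$-length (not $\tilde g$-length) of $\tilde\gamma$, hence the $g$-diameter. So your instinct to ``run the comparison directly'' is right, but the mechanism by which the weight ``never enters'' is this test-function substitution, not any a priori control on $u$.

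Your argument for the last assertion is fine; the paper does it in one line by integrating \eqref{u} over the closed manifold.
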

Clearly, taking $u\equiv\text{const}$, $V\equiv 0$, $\mathcal{Q}=(n-1)\lambda g$ with the following parameters $\alpha=\gamma=k=0$ and $\beta=1$ this result recovers the classical Bonnet-Myers theorem. If $V$ has a sign, the condition \eqref{F} permits a negative lower bound on the tensor $\mathcal{Q}$.  Being the assumptions of the previous theorem very general, we expect that it can be used in different contexts. An immediate corollary is the following extension of a classical Cheng's result:

\begin{corollary}\label{c-cheng} Let $(M^n,g)$, $n\geq 2$, be a complete Riemannian manifold with $\mathrm{Ric}\geq -(n-1)g$. If there exists a positive solution $u\in C^{\infty}(M)$ of 
$$
-\Delta u \geq \mu\, u
$$
for some $\mu>0$, then 
$$
\mu\leq \frac{(n-1)^2}{4}.
$$
\end{corollary}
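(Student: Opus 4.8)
\section*{Proof proposal for Corollary~\ref{c-cheng}}

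The plan is to obtain the corollary as a direct application of Theorem~\ref{teoA}, arguing by contradiction: assume $\mu>\frac{(n-1)^2}{4}$, show that the hypotheses of Theorem~\ref{teoA} can be arranged to hold with a suitable choice of constants, and observe that its conclusion then forces $\mu=0$.

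First I would fix the structural data as follows: take $\alpha=\beta=\gamma=0$ and $\mathcal{Q}=-(n-1)g$, so that the assumed curvature bound $\mathrm{Ric}\geq -(n-1)$ is exactly condition~\eqref{Ric}; and take $V\equiv\mu$, a positive constant (in particular $V\in C^\infty(M)$), so that the differential inequality $-\Delta u\geq\mu\,u$ is precisely condition~\eqref{u}. With these choices, using an orthonormal frame one has $\mathcal{Q}_{ii}=-(n-1)$ for every $i$, so condition~\eqref{F} reads $-(n-1)+k\mu\geq (n-1)\lambda$, condition~\eqref{hp1} reads $k\geq 0$, and condition~\eqref{hp2} reduces to $k-(n-1)\frac{k^2}{4}>0$, i.e.\ $0<k<\frac{4}{n-1}$.

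The only point where the quantitative hypothesis on $\mu$ enters is in checking that a valid $k$ exists. Indeed, \eqref{F} holds for some $\lambda>0$ as soon as $k\mu>n-1$ (one then simply sets $\lambda=\frac{k\mu-(n-1)}{n-1}$), so the admissible range for $k$ is the interval $\left(\frac{n-1}{\mu},\,\frac{4}{n-1}\right)$, which is nonempty precisely because $\mu>\frac{(n-1)^2}{4}$. Fixing any such $k$, all the hypotheses of Theorem~\ref{teoA} are satisfied; in particular $M^n$ is closed, but the decisive point is that here $V\equiv\mu\geq 0$ and $\gamma=0\geq 0$, so the final assertion of Theorem~\ref{teoA} yields $V\equiv 0$ on $M^n$, that is $\mu=0$, contradicting $\mu>0$. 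Therefore $\mu\leq\frac{(n-1)^2}{4}$.

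I do not anticipate any serious difficulty: the entire content is the elementary observation that the parameter window forced by \eqref{F} and the one forced by \eqref{hp2} overlap exactly above the Cheng threshold $\frac{(n-1)^2}{4}$. The only things to be mildly careful about are that one is permitted to take $V$ constant and to let $\lambda$ depend on the chosen $k$, both of which are harmless since Theorem~\ref{teoA} only requires $V\in C^\infty(M)$ and the existence of \emph{some} $\lambda>0$.
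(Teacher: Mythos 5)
Your proposal is correct and follows essentially the same route as the paper: the same choices $\alpha=\beta=\gamma=0$, $\mathcal{Q}=-(n-1)g$, $V\equiv\mu$, the same reduction of \eqref{F}--\eqref{hp1}--\eqref{hp2} to the window $\frac{n-1}{\mu}<k<\frac{4}{n-1}$ that is nonempty exactly when $\mu>\frac{(n-1)^2}{4}$, and the same contradiction by compactness. The only cosmetic difference is that you invoke the final assertion of Theorem~\ref{teoA} ($V\equiv 0$) while the paper phrases the contradiction as the impossibility of a positive supersolution of $-\Delta u\geq\mu u$ on a closed manifold; these are the same integration argument.
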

This corollary extends the well known upper bound for the first eigenvalue of $-\Delta$ obtained by Cheng \cite{che}. A comparison argument shows that the previous estimate follows by the aforementioned Cheng's result (see \cite[Chapter 9]{li}), however our proof does not rely on it (and actually gives an alternative approach on the problem).

We mention that an interesting aspect of the Bonnet-Myers theorem is the rigidity problem, i.e. if the equality hods in \eqref{eq-d} then $(M,g)$ must be isometric to the standard sphere. This problem has been investigated in \cite{che} and in \cite{Ruan} for the Ricci tensor and for the Bakry-\'Emery Ricci tensor, respectively. It would be interesting to investigate the analogue rigidity for Theorem \ref{teoA}.

The proof of Theorem \ref{teoA} is based on a conformal change of the metric $g$ via the function $u$ and is inspired by the proofs of \cite[Theorem 1]{enr} and of \cite[Theorem 1.2]{cmr}, in the context of stable constant mean curvature hypersurfaces and of finite index minimal hypersurfaces,  respectively. In this context we recover \cite[Corollary 1.3]{cmr} indeed by taking $u$ the stability function, $V=|A|^2$,  $\alpha=\beta=\gamma=0$, $k=\frac{n-1}{n}$ and $\mathcal{Q}=-A^2$. We refer to \cite{cmr} for further details.

\medskip

In the second part of the paper we apply Theorem \ref{teoA} to globally hyperbolic spacetimes.

\subsection{Application to General Relativity} A general solution to the \emph{Einstein equation}, usually called a \emph{spacetime},  is a smooth, connected, four-dimensional Lorentzian manifold $(X^4,\gamma)$ with signature $(-,+,+,+)$ satisfying 
$$
\mathrm{Ric}-\frac12\mathrm{R}\gamma=\mathcal{T}
$$
where $\gamma$ is the Lorentzian metric on $X^4$, $\mathrm{Ric}$, $\mathrm{R}$  denote the Ricci and the scalar curvature of $(X^4,\gamma)$ and $\mathcal{T}$ is a divergence-free, symmetric two tensor called the {\em total stress-energy tensor}. In local coordinates we have
\begin{equation}\label{Ein}
R_{\alpha\beta}-\frac{1}{2}R\gamma_{\alpha\beta}=\mathcal{T}_{\alpha\beta}\, , 
\end{equation}
for $\alpha,\beta=0,1,2,3$. Note that, in standard notations, the tensor $\mathcal{T}$ can be decomposed in the following way 
$$
\mathcal{T}_{\alpha\beta}= T_{\alpha\beta}-\Lambda \gamma_{\alpha\beta} \, , 
$$
where $T$ is the \emph{stress-energy tensor},  and $\Lambda$ is the {\em cosmological constant}. To avoid undesirable pathologies of the causal structure of the spacetime it is customary to postulate the existence of a smooth spacelike Cauchy hypersurface $M^3\subset X^4$,  i.e.  a smooth hypersurface $M^3$ with the property that any inextendible causal curve intersects it at precisely one point.  Spacetimes $(X^4,\gamma)$ with this property are called \emph{globally hyperbolic} and are,  in particular, stable causal, i.e. they allow the existence of a globally defined differentiable function $t$ whose gradient is time-like (see \cite{bersan} and also \cite{c,bc}).  We call $t$ a time function and the foliation given by its level surfaces a $t-$foliation.  Topologically,  a spacetime foliated by the level surfaces of a time function is diffeomorphic to a product manifold $I\times M^3$ where $I\subset \RR$ and $M^3$ is a three-dimensional smooth manifold without boundary, usually called the \emph{slice} (see \cite{cho}). Relative to this parametrization the spacetime manifold $(X^4,\gamma)$ takes the form 
$$
(X^4,\gamma)=(I\times M^3, - N^2(t,x) dt^2+g_{ij}(t,x)dx^i dx^j)\, , 
$$
where $t=x^0\in I$ and $x=(x^1,x^2,x^3)$ are arbitrary coordinates on the slice and $i,j=1,2,3$. The function $N(t,x)$ is called the \emph{lapse function} of the foliation and $g_{ij}$ its first fundamental form.  We will assume that the metric $g$ on $M^3$ is complete. We will denote by
$$
M_t=\lbrace t\rbrace \times M^3\, 
$$
the \emph{leaves} of the foliation and by $\nu$ the future directed unit normal given by 
$$
\nu=\frac{\partial_t}{N}:=\op\, .
$$
The {\em second fundamental} form $h$ of the foliation is given by 
\begin{equation}\label{fond}
h_{ij}=-\dfrac{1}{2N}\partial_t g_{ij} \, ,
\end{equation}
and we denote with $H$ the {\em mean curvature} of the foliation, i.e.  $H=g^{ij}h_{ij}$.

In this notation, it is useful to introduce the following well known terminology concerning the stress-energy tensor $T$:
$$
T_{\nu\nu}=:\rho,\qquad T_{ii}=:p_i,\quad i=1,2,3,
$$
where $\rho$ and $p_i$ are called the {\em energy density} and the {\em principal pressures}, respectively (see e.g. \cite[Chapter 4.3]{he}). To be precise, this terminology, is used when $\left\{-\nu, \frac{\partial}{\partial x^1}, \frac{\partial}{\partial x^2}, \frac{\partial}{\partial x^3}\right\}$ is a orthonormal basis of eigenvectors of $T$, for this reason we will assume that $T$ is diagonalizable.

As already observed in \cite[Introduction]{gall}, in the globally hyperbolic setting Gauss equations satisfied by the slice $M^3\subset X^4$ provide a formula for a weighted version of the Ricci tensor, the so-called Bakry-\'Emery Ricci tensor, which relates it to the matter content ($\mathcal{T}$) and the intrinsic geometry ($h$). This allows us to apply Theorem \ref{teoA} in this setting under suitable assumptions on the leave of the foliation $M_{t_0}$ at some $t_0\in I$. We refer to Section \ref{proof} for further details. 

Before presenting our results, we recall some well known {\em parameters} (actually functions) which describe the kinematic of cosmological expansion in a FLRW spacetime. In this setting the metric of the spacetime $(X^4=I\times M^3,\gamma)$ is given by
\begin{equation}\label{FLRW}
\gamma = -dt^2+a(t)^2 g^{K}
\end{equation}
where $a$ is a positive smooth function and $g^K$ is a metric of constant sectional curvature equal to $K$ on $M^3$. In the previous notation, we have $N\equiv 1$, $\op=\partial_t$ and $g=a(t)^2 g^K$. The {\em Hubble parameter} and the {\em deceleration parameter} are defined as follows
$$
\mathcal{H}(t):=\frac{a'(t)}{a(t)},\qquad q(t):=-\frac{a''(t)a(t)}{a'(t)^2}.
$$
Note that the Hubble parameter and the mean curvature are related by the following $\mathcal{H}=-\frac13 H$. Moreover, given a point $x\in M$ and a tangent vector $V\in T_xM$, we have
$$
|V|:=\sqrt{g(V,V)}= a(t)\sqrt{g^K(V,V)}\,, 
$$ 
and therefore
$$
|V|'' |V| = -q (|V|')^2.
$$
This simple relation on the FLRW cosmological model motivates the following:

\begin{definition}
Let $(X^4,\gamma)=(I\times M^3, -N^2 dt^2+g)$ be a globally hyperbolic spacetime. We define

\begin{itemize}

\item the {\em deceleration parameter} of the spacetime as
$$
q(t):=\sup \left\{q\in\RR: \overline{\partial}^2_{tt} \vert V\vert \vert V\vert_{\left.\right|_{(t,x)}}\leq -q \left(\op \vert V\vert\right)^2_{\left.\right|_{(t,x)}}, \forall x\in M, \forall V\in T\mathcal{A}_{(t,x)} \right\};
$$ 
where $\mathcal{A}_{(t,x)}\subset X^4$ is a neighbourhood of $(t,x)$.

\item The {\em Hubble parameter} of the spacetime as
$$\mathcal{H}(t):=\sqrt{\inf_{M_{t}} H^2}.$$

\item The {\em pressure parameter} of the spacetime as
$$
\mathcal{P}(t):=\max_{i=1,2,3} \left( \sup_{M_t} p_i\right)\,.
$$
\end{itemize}
\end{definition}

\begin{remark}\label{length} We recall that, given a smooth space curve $\sigma:J\subset\RR\to M^3$, the {\em length functional} is defined as
$$
L(\sigma):=\int_{\sigma}|\dot\sigma|\,.
$$
We observe that the condition $q(t_0)= 0$ for some $t_0\in I$ implies that the length of all space curves is concave in the (normalized) time, i.e. for all curves $\sigma:J\subset\RR\to M^3$ we have 
$$
\overline{\partial}^2_{tt}\mathrm{L}(\sigma)_{\left.\right|_{t_0}}= \int_{\sigma}\overline{\partial}^2_{tt} \vert \dot{\sigma}\vert_{\left.\right|_{t_0}} \leq 0\, . 
$$
More in general, if $q(t_0)\geq 0$ for some $t_o\in I$, by using H\"older inequality, we have 
$$
\overline{\partial}^2_{tt}\mathrm{L}(\sigma)_{\left.\right|_{t_0}}= \int_{\sigma}\overline{\partial}^2_{tt} \vert \dot{\sigma}\vert_{\left.\right|_{t_0}}\leq -q(t_0) \int_\sigma \frac{\left(\overline{\partial}_{t} \vert \dot{\sigma}\vert_{\left.\right|_{t_0}}\right)^2}{\vert \dot{\sigma}\vert}\leq -q(t_0)\left( \int_\sigma \overline{\partial}_{t} \vert \dot{\sigma}\vert_{\left.\right|_{t_0}}\right)^2 \left(\int_\sigma \vert\dot{\sigma}\vert \right)^{-1}
$$
i.e. at $t_0$
$$
\mathrm{L}(\sigma)\, \overline{\partial}^2_{tt}\mathrm{L}(\sigma)\leq -q(t_0) \, \overline{\partial}_{t}\mathrm{L}(\sigma)^2\,.
$$
\end{remark}
It can be shown that (see e.g. \cite[Chapter 5]{he}) a FLRW metric \eqref{FLRW} satisfies the Einstein equation \eqref{Ein} if and only if
$$
\begin{cases}
\frac{a'(t)^2}{a(t)^2}=\frac13 \T_{\nu\nu}-\frac{K}{a(t)^2} \\
\frac{a''(t)}{a(t)}=-\frac12 \T_{ii}-\frac16 \T_{\nu\nu}
\end{cases}
$$
for all $i=1,2,3$. Therefore a simple computation shows that
$$
q = \frac12+\frac{a^2 \T_{ii}+K}{2a^2\mathcal{H}^2} = \frac12+\frac{a^2 (p_i-\Lambda)+K}{2a^2\mathcal{H}^2}
$$
for all $i=1,2,3$, provided $\mathcal{H}^2>0$. In particular, we have that
\begin{equation}\label{flrw2}
q(t_0)>\frac12,\quad p_i(t_0)\leq \Lambda\quad\text{and}\quad \mathcal{H}(t_0)^2>0\qquad\Longrightarrow\qquad K>0,
\end{equation}
that is, the spacetime is closed.

This simple observation motivates the following result which applies to general globally hyperbolic spacetimes.

\begin{theorem}\label{t-main}
Let $(X,\gamma)=(I\times M^3, - N^2 dt^2+g)$ be a globally hyperbolic spacetime such that $g$ is complete and
$$
a)\,\, q(t_0)>\frac{1}{2}\qquad b)\,\, \mathcal{P}(t_0)\leq \Lambda\qquad c)\,\, \mathcal{H}(t_0)^2>0\,,
$$
for some $t_0\in I$. Then $M^3$ is compact.  In addition, $M^3$ is diffeomorphic to a quotient of $\mathbb S^3$ and its diameter satisfies
$$
\mathrm{diam}(M^3,g_{\left.\right|_{t_0}})\leq \frac{\pi}{\mathcal{H}(t_0)}\sqrt{\frac{8(10q+4)}{3(2q-1)(2q+1)}}\,.
$$
\end{theorem}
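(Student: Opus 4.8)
The plan is to apply Theorem~\ref{teoA} to the spacelike slice $(M^{3},g_{|_{t_{0}}})$ with the lapse $u:=N_{|_{t_{0}}}>0$ playing the role of the potential. First I would write down the classical structure equations of the foliation $\{M_{t}\}$ at $t_{0}$: the Gauss equation for $M_{t_{0}}\subset X^{4}$, the Riccati (evolution) equation for the second fundamental form $h$, and the Raychaudhuri equation for $H$. Eliminating the ambient term $R^{X}(\nu,\cdot\,,\cdot,\nu)$ between the Gauss and Riccati equations yields the pointwise identity
$$\mathrm{Ric}_{g_{|_{t_{0}}}}=\frac{\nabla^{2}N}{N}+\mathrm{Ric}^{X}_{|_{TM_{t_{0}}}}+\overline{\partial}_{t}h-Hh+2h^{2}\qquad\text{on }M_{t_{0}},$$
while the Raychaudhuri equation rearranges to $-\Delta N=VN$ with $V:=\overline{\partial}_{t}H-|h|^{2}-\mathrm{Ric}^{X}(\nu,\nu)$. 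Using the Einstein equation~\eqref{Ein} to express the ambient Ricci, the energy density $\rho$ appears in $\mathrm{Ric}^{X}_{ii}$ and in $V$ with opposite weight $\tfrac12$, so that the combination $\mathrm{Ric}^{X}_{ii}-\mathrm{Ric}^{X}(\nu,\nu)=p_{i}+2\Lambda-(p_{1}+p_{2}+p_{3})$ is $\rho$-free; this is precisely why the multiplier in~\eqref{F} must be $k=1$. By hypothesis b) (all $p_{j}\le\mathcal{P}(t_{0})\le\Lambda(t_{0})$) one gets $\mathrm{Ric}^{X}_{ii}-\mathrm{Ric}^{X}(\nu,\nu)\ge 2(\Lambda-\mathcal{P})\ge 0$.

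Then I would read off the parameters $n=3$, $\alpha=1$, $\beta=0$, $\gamma=0$, $k=1$ and $\mathcal{Q}:=\mathrm{Ric}_{g_{|_{t_{0}}}}-\nabla^{2}N/N$, so that~\eqref{Ric} and~\eqref{u} hold with equality, \eqref{hp1} reads $0\ge0$, and~\eqref{hp2} reads $\tfrac32>0$. The substance is~\eqref{F}. Unwinding the definition of $q(t_{0})$ via $\overline{\partial}_{t}|V|=-h(V,V)/|V|$ and $\overline{\partial}^{2}_{tt}|V|\,|V|=-(\overline{\partial}_{t}h)(V,V)-h(V,V)^{2}/|V|^{2}$, hypothesis a) is exactly $(\overline{\partial}_{t}h)(V,V)\ge(q-1)\,h(V,V)^{2}/|V|^{2}$ for all $V$ (with $q=q(t_{0})$), hence in an $h$-eigenframe $(\overline{\partial}_{t}h)_{ii}\ge(q-1)\mu_{i}^{2}$ and, summing, $\overline{\partial}_{t}H\ge(q+1)|h|^{2}$. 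Feeding this into the two identities above gives, for every $i$,
$$\mathcal{Q}_{ii}+V\ \ge\ 2(\Lambda-\mathcal{P})+(q+1)\mu_{i}^{2}-H\mu_{i}+q|h|^{2}\ \ge\ \frac{(2q-1)(2q+1)}{2(5q+2)}\,H^{2},$$
the last step being the minimisation of $(q+1)\mu_{i}^{2}-H\mu_{i}+q\sum_{k}\mu_{k}^{2}$ under the constraint $\sum_{k}\mu_{k}=H$ (attained at $\mu_{i}=\tfrac{1+q}{5q+2}H$, $\mu_{j}=\mu_{k}=\tfrac{4q+1}{2(5q+2)}H$). Since $H^{2}\ge\mathcal{H}(t_{0})^{2}>0$ on $M_{t_{0}}$ by hypothesis c), \eqref{F} holds with $(n-1)\lambda=\tfrac{(2q-1)(2q+1)}{2(5q+2)}\mathcal{H}(t_{0})^{2}$, which is strictly positive precisely because $q>\tfrac12$.

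Theorem~\ref{teoA} then applies and gives that $M^{3}$ is closed with finite fundamental group, and
$$\mathrm{diam}(M^{3},g_{|_{t_{0}}})\ \le\ \pi\sqrt{\frac1\lambda\Bigl(1+\frac{(2\alpha)^{2}}{4(n-1)\bigl[\alpha+\beta+k(\gamma+1)-(n-1)k^{2}/4\bigr]}\Bigr)}=\pi\sqrt{\frac{4}{3\lambda}}=\frac{\pi}{\mathcal{H}(t_{0})}\sqrt{\frac{8(10q+4)}{3(2q-1)(2q+1)}},$$
where the last equality uses $16(5q+2)=8(10q+4)$. Finally, a closed $3$-manifold with finite fundamental group has a simply-connected, hence (by the Poincar\'e/elliptization theorem) spherical, universal cover, so it is diffeomorphic to a quotient of $\mathbb{S}^{3}$. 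The step I expect to be the main obstacle is the verification of~\eqref{F}: nailing down every sign in the structure equations (I would fix them by testing the $\mathrm{Ric}$-identity on the FLRW model, where it must collapse to $\mathrm{Ric}_{g}=2Ka^{-2}g$), identifying the $\rho$-free combination $\mathcal{Q}+kV$, and proving the sharp quadratic inequality — and, should the proof of Theorem~\ref{teoA} require~\eqref{F} in every unit direction rather than only along $h$-eigenvectors, checking that the minimum of $(\overline{\partial}_{t}h)(e,e)+2h^{2}(e,e)-Hh(e,e)+q|h|^{2}$ over unit vectors $e$ is still realised at an eigenvector, so that the constant $\tfrac{(2q-1)(2q+1)}{2(5q+2)}$ is unchanged.
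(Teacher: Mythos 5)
Your proposal follows essentially the same route as the paper's proof: the same structure equations for the embedding $M_{t_0}\subset X^4$ give $u=N$, $n=3$, $\alpha=1$, $\beta=\gamma=0$, $k=1$ in Theorem \ref{teoA}, the same $\rho$-free combination $\mathcal{T}_{ii}-\mathcal{T}_{\nu\nu}-\mathrm{tr}(\mathcal{T})=2\Lambda-p_j-p_k$ handles hypothesis b), and your exact constrained minimisation over the eigenvalues of $h$ reproduces precisely the paper's constant $\frac{(2q-1)(2q+1)}{2(5q+2)}H^2$ (which the paper obtains instead via the traceless splitting of $h$ and Young's inequality), leading to the same $A=\frac83$, the same $\lambda$, the same diameter bound and the same appeal to Perelman's elliptization. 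The eigenframe-versus-general-direction caveat you flag at the end is treated at exactly the same level in the paper, whose verification of \eqref{F} is likewise carried out in an $h$-eigenframe, so your write-up is faithful to (and no less careful than) the published argument.
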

We point out that $a), b), c)$ are the natural generalizations of the assumption in \eqref{flrw2} that guarantee the closure result for FLRW spacetime. 

An important case in which our result applies is in the context of a \emph{perfect fluid spacetime}. In this setting we have a globally hyperbolic spacetime with the tensor $\mathcal{T}$ satisfying
\begin{equation}\label{e-pf}
\T_{\nu\nu}= \rho+\Lambda \, , \quad \T_{\nu i}=0 \,, \quad \T_{ij}= \left(p-\Lambda\right)g_{ij}\, , 
\end{equation}
where $\rho=\rho(t,x)$ is the \emph{energy density} and $p=p(t,x)$ is the \emph{pressure} of the fluid (in the previous notation $p_i=p$ for all $i=1,2,3$). It is well known that the pressure of perfect fluids depends only on time, $p=p(t)=\mathcal{P}(t)$. This follows from the Bianchi identity applied to \eqref{Ein} (see e.g. \cite[equation (5.2)]{bc}). With the previous notations we have the following 

\begin{corollary}\label{cor}
Every perfect fluid spacetimes with $q(t_0)>1/2$, $p(t_0)\leq \Lambda$ and $\mathcal{H}(t_0)^2>0$, for some $t_0\in I$, has compact time slices. 
\end{corollary}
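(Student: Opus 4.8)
The plan is to obtain this as a direct specialization of Theorem \ref{t-main}; the entire content of the corollary is the verification that the three hypotheses $a)$, $b)$, $c)$ of that theorem hold for a perfect fluid spacetime under the stated assumptions, and the only genuinely nontrivial point is the identification of the pressure parameter $\mathcal{P}(t_0)$ with the fluid pressure $p(t_0)$.

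First I would unwind the definition of $\mathcal{P}$. For a perfect fluid spacetime the relations \eqref{e-pf} give $\mathcal{T}_{ij}=(p-\Lambda)g_{ij}$, so in any coordinate frame on the slice the principal pressures all coincide, $p_i=p$ for $i=1,2,3$, and hence
$$
\mathcal{P}(t)=\max_{i=1,2,3}\left(\sup_{M_t}p_i\right)=\sup_{M_t}p(t,\cdot).
$$
Next I would invoke the standard fact, following from the twice-contracted Bianchi identity applied to the Einstein equation \eqref{Ein} in the perfect fluid case, that the pressure is spatially constant, $p=p(t)$ (see \cite[equation (5.2)]{bc}). Consequently $\mathcal{P}(t)=p(t)$, so the assumption $p(t_0)\leq\Lambda(t_0)$ is exactly condition $b)$ of Theorem \ref{t-main}, while $a)$ and $c)$ are assumed verbatim.

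With the three hypotheses in hand, applying Theorem \ref{t-main} at the time $t_0$ yields that $M^3$ is compact — indeed diffeomorphic to a quotient of $\mathbb{S}^3$, with the explicit diameter bound stated there. The main, and essentially the only, obstacle is the reduction $\mathcal{P}(t_0)=p(t_0)$, which rests on the Bianchi-identity argument that perfect-fluid pressure depends only on time; granting this well-known fact, the corollary follows immediately.
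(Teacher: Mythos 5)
Your proposal is correct and follows exactly the paper's own argument: identify $\mathcal{P}(t_0)=p(t_0)$ via the Bianchi-identity fact that the perfect-fluid pressure depends only on time, and then apply Theorem \ref{t-main} with hypotheses $a)$, $b)$, $c)$ verified. Nothing is missing.
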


 In general, we can assume that the pressure and the energy density satisfy, for some function $\omega=\omega(t,x)$, the following \emph{equation of state}:
$$
p=\omega \rho.
$$
Special cases of interest in cosmology are when $\omega=\omega(t,x)$ is constant; in particular we can distinguish the following cases: \emph{radiation dominated} when $\omega=\frac{1}{3}$,  \emph{matter dominated} when $\omega=0$ and \emph{vacuum energy dominated} when $\omega=-1$. The assumption on the pressure in Corollary \ref{cor} follows if $\omega\leq 0$ and $\Lambda\geq 0$.

\medskip

Finally, we deal also with energy-decelerating spacetimes. Indeed, it is natural to consider, instead of the length functional, the {\em energy functional} of space curves:
$$
E(\sigma):=\int_\sigma |\dot \sigma|^2\,.
$$
In analogy to the definition of the deceleration parameter and to what we said in Remark \ref{length}, we introduce the following:

\begin{definition}
Let $(X^4,\gamma)=(I\times M^3, -N^2 dt^2+g)$ be a globally hyperbolic spacetime and let $t_0\in I$. We say that $M_{t_0}$ is {\em energy--decelerating} in all directions if the energy of all space curves is concave in the proper time, i.e. for all curves $\sigma:J\subset\RR\to M^3$ we have 
$$
\overline{\partial}^2_{tt} \text{E}(\sigma)_{\left.\right|_{t_0}}:=\int_{\sigma} \overline{\partial}^2_{tt}|\dot\sigma|^2_{\left.\right|_{t_0}} \leq 0,
$$
where $\vert \cdot\vert:=\sqrt{g(\cdot,\cdot)}$.
\end{definition}

Under this condition, we prove the following:

\begin{theorem}\label{t-main2}
Let $(X,\gamma)=(I\times M^3, - N^2 dt^2+g)$ be a globally hyperbolic spacetime with total stress-energy tensor $\T$. Assume that $g$ is complete and there exists $t_0\in I$ such that, 
\begin{itemize}
\item[$a)$] $M_{t_0}$ is {\em energy--decelerating} in all directions;
\item[$b)$] $\T_{ii}-k\T_{\nu\nu}-\frac{1+k}{2}\mathrm{tr}(\T)\geq 0$ on $M_{t_0}$, for all $i=1,2,3$ and for some $$0.59\simeq\frac{\sqrt{43}-3}{6}<k<1+\sqrt{3};$$
\item[$c)$] $\mathcal{H}(t_0)^2>0$.
\end{itemize}
Then $M^3$ is compact.  In addition, $M^3$ is diffeomorphic to a quotient of $\mathbb S^3$ and its diameter satisfies
$$
\mathrm{diam}(M^3,g_{\left.\right|_{t_0}})\leq \frac{6\pi}{\mathcal{H}(t_0)}\sqrt{\frac{(3+2k-k^2)(4+3k)}{(2+2k-k^2)(18k^2+18k-17)}}\,.
$$
\end{theorem}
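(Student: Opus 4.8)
The plan is to apply Theorem~\ref{teoA} to the complete Riemannian manifold $(M_{t_0},g|_{t_0})$, taking $n=3$, conformal potential $u:=N|_{t_0}>0$ (the lapse restricted to the leaf), and using the constant $k$ of hypothesis~$b)$ in the role of the parameter $k$ of Theorem~\ref{teoA}. Everything then reduces to producing, from the Einstein equation~\eqref{Ein} together with the Gauss equation and the Riccati (radial) equation of the $t$-foliation, the two structural inequalities \eqref{Ric} and \eqref{u} on $M_{t_0}$ with explicit $\alpha,\beta$, potential-coefficient $\gamma$, tensor $\mathcal{Q}$ and function $V$, and to checking \eqref{F}, \eqref{hp1}, \eqref{hp2}.

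First I would rewrite \eqref{Ein} in trace-reversed form, $R^X_{\alpha\beta}=\T_{\alpha\beta}-\tfrac12\mathrm{tr}(\T)\,\gamma_{\alpha\beta}$, and record $\mathrm{tr}(\T)=-\T_{\nu\nu}+\sum_j\T_{jj}$. Contracting the Gauss equation of $M_{t_0}\subset X^4$ and substituting the Riccati equation (whose FLRW instance is pinned down by the relations recalled just before the statement of Theorem~\ref{t-main}) yields
\begin{equation*}
\mathrm{Ric}^{M_{t_0}}_{ij}=\Big(\T_{ij}-\tfrac12\mathrm{tr}(\T)\,g_{ij}\Big)+2\,(h^2)_{ij}-H h_{ij}+\op h_{ij}+\frac{\nabla^2_{ij}N}{N}.
\end{equation*}
Hypothesis $a)$ is precisely the statement that $\op h\ge 0$ as a symmetric tensor on $M_{t_0}$: since $\op^2|\dot\sigma|^2=(\op^2 g_{ij})\dot\sigma^i\dot\sigma^j=-2\,(\op h)(\dot\sigma,\dot\sigma)$, energy--deceleration in all directions means $(\op h)(V,V)\ge 0$ for every $V$. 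Dropping $\op h_{ij}\ge0$ gives \eqref{Ric} with $\alpha=1$, $\beta=0$, $\mathcal{Q}_{ij}=\T_{ij}-\tfrac12\mathrm{tr}(\T)\,g_{ij}+2\,(h^2)_{ij}-H h_{ij}$. Taking the $g$-trace of the Riccati equation (equivalently, the Raychaudhuri equation for the normal congruence $\nu=\op$, whose acceleration is $\nabla\log N$, so that the divergence term is $\Delta N/N$) gives
\begin{equation*}
\frac{\Delta N}{N}=\big(\T_{\nu\nu}+\tfrac12\mathrm{tr}(\T)\big)-\op H+|h|^2,
\end{equation*}
and since $\op H=\mathrm{tr}_g(\op h)+2|h|^2\ge 2|h|^2$ by $a)$, this forces $-\Delta N\ge N\big(|h|^2-\T_{\nu\nu}-\tfrac12\mathrm{tr}(\T)\big)$, i.e. \eqref{u} with $\gamma=0$ and $V=|h|^2-\T_{\nu\nu}-\tfrac12\mathrm{tr}(\T)$.

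Next I would verify the three conditions. Since $\gamma+1-\alpha=0$, \eqref{hp1} holds trivially. With $n=3$, $\alpha=1$, $\beta=\gamma=0$, condition \eqref{hp2} reads $1+k-\tfrac{k^2}{2}>0$, i.e. $2+2k-k^2>0$, i.e. $k<1+\sqrt3$ — the right endpoint of the interval in $b)$. For \eqref{F}, work in the eigenframe $\{e_i\}$ of $h$, with principal curvatures $\kappa_i$ satisfying $\kappa_1+\kappa_2+\kappa_3=H$; substituting $\mathrm{tr}(\T)=-\T_{\nu\nu}+\sum_j\T_{jj}$, a short computation gives the clean identity
\begin{equation*}
\mathcal{Q}_{ii}+kV=\Big(\T_{ii}-k\T_{\nu\nu}-\tfrac{1+k}{2}\mathrm{tr}(\T)\Big)+\big(2\kappa_i^2-H\kappa_i+k\,|h|^2\big),
\end{equation*}
whose first parenthesis is $\ge0$ exactly by hypothesis $b)$. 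A minimization of the quadratic $2\kappa_i^2-H\kappa_i+k\sum_j\kappa_j^2$ over the $\kappa_i$ subject to $\sum_j\kappa_j=H$, combined with hypothesis $c)$ in the form $\inf_{M_{t_0}}H^2=\mathcal{H}(t_0)^2>0$, then yields $\mathcal{Q}_{ii}+kV\ge 2\lambda$ with $\lambda$ a positive multiple of $\mathcal{H}(t_0)^2$; positivity of $\lambda$ is exactly the left endpoint constraint $k>\tfrac{\sqrt{43}-3}{6}$ (the root of $18k^2+18k-17$). So the interval in $b)$ is precisely the set of admissible $k$.

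With all hypotheses in place, Theorem~\ref{teoA} gives that $M^3$ is closed with finite fundamental group; moreover, since $n=3$, $\alpha=1$, $\beta=\gamma=0$, the diameter estimate there collapses to $\mathrm{diam}(M^3,g|_{t_0})^2\le \tfrac{\pi^2}{\lambda}\cdot\tfrac{3+2k-k^2}{2+2k-k^2}$, and inserting the value of $\lambda$ reproduces the displayed bound. Finally, a closed $3$-manifold with finite fundamental group has universal cover a closed simply connected $3$-manifold, hence $\mathbb{S}^3$ by the resolution of the Poincar\'e conjecture; therefore $M^3\cong\mathbb{S}^3/\Gamma$ with $\Gamma\cong\pi_1(M^3)$. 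The conceptual structure is immediate once Theorem~\ref{teoA} is in hand; the main obstacle is the bookkeeping in the middle step — fixing curvature-tensor and second-fundamental-form conventions so that the Gauss--Riccati--Einstein combination comes out exactly as above (so that $a)$ signs the $\op h$-terms favourably and the matter part of $\mathcal{Q}_{ii}+kV$ is, verbatim, the expression in $b)$), and performing the constrained minimization over the $\kappa_i$ carefully enough that the output lands on the advertised polynomials $18k^2+18k-17$ and $2+2k-k^2$, hence on the stated diameter estimate.
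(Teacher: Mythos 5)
Your proposal takes essentially the same route as the paper: apply Theorem \ref{teoA} on the slice with $n=3$, $u=N$, $\alpha=1$, $\beta=\gamma=0$, read $\mathcal{Q}$ and $V$ off the Gauss/Riccati--Einstein identities, use $a)$ in the form $\op h\geq 0$ (which is exactly the content of Lemma \ref{l-dec} combined with $\op h_{ij}=-\tfrac{1}{2N^2}(\partial^2_{tt}g_{ij}+2\partial_t N\, h_{ij})$) to discard the time-derivative terms, use $b)$ for the matter part and $c)$ for uniform positivity, and finish with the diameter bound of Theorem \ref{teoA} and Perelman for the topology; whether one drops the $\op h$-terms before forming $\mathcal{Q},V$ (as you do, turning the identities into the inequalities \eqref{Ric}, \eqref{u}) or afterwards inside $\mathcal{Q}_{ii}+kV$ (as the paper does) is immaterial, and your expression for $\mathcal{Q}_{ii}+kV$ coincides with the paper's lower bound. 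The one claim that does not check out numerically is that the constrained minimization of $2\kappa_i^2-H\kappa_i+k\sum_j\kappa_j^2$ subject to $\sum_j\kappa_j=H$ lands ``exactly'' on $18k^2+18k-17$ with admissibility threshold $k>\frac{\sqrt{43}-3}{6}$: the sharp minimum is $\frac{2k^2+2k-1}{2(4+3k)}\,H^2=\frac{18k^2+18k-9}{18(4+3k)}\,H^2$, which is positive already for $k>\frac{\sqrt{3}-1}{2}$; the paper's polynomial $18k^2+18k-17$ arises instead from the lossier route of splitting off the trace-free part of $h$ and applying Young's inequality. Since the sharp constant dominates the paper's constant on the stated range of $k$, your argument still gives \eqref{F} with $(n-1)\lambda=\frac{18k^2+18k-17}{18(4+3k)}\mathcal{H}(t_0)^2$ (indeed something slightly stronger), hence the stated diameter bound, so this is an inaccuracy of bookkeeping rather than a gap; the remaining checks — \eqref{hp1} trivially, \eqref{hp2} reducing to $k<1+\sqrt 3$, $A=\frac{2(3+2k-k^2)}{2+2k-k^2}$, and the quotient-of-$\mathbb S^3$ conclusion — are all correct.
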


In particular, in the context of perfect fluid spacetimes, an analogous of Corollary \ref{cor} can be proved.

%
%
%
%

\

\section{A Bonnet-Myers type result: proof of Theorem \ref{teoA} and Corollary \ref{c-cheng}}\label{BM}

\begin{proof}[Proof of Theorem \ref{teoA}] Let $k>0$ and consider the conformal metric
$$
\gt = u^{2k} g.
$$
Given a reference point $o\in M$ and $\rho>0$ such that the geodesic ball (of $g$) $B_{\rho}(o)$ is contained in the interior of $M$; we want to construct a $\gt-$minimizing geodesic $\widetilde\gamma$ in $B_{\rho}(o)$, joining $o$ to $\partial B_{\rho}(o)$. In order to do this we consider $u_{\rho}:=u+\eta_{\rho}$,  where $\eta_{\rho}$ is a smooth function such that $\eta_\rho\equiv1$ in $B_{\rho+1}(o)^c$ and $\eta_\rho\equiv 0$ in $B_{\rho}(o)$. Since $u_\rho$ is uniformly bounded below away from zero, the metric
$$\gt_\rho=u_\rho^{2k}g$$
is complete, and thus there exists a $\gt_\rho-$minimizing geodesic connecting $o$ to $\partial B_{\rho}(o)$. Now,  by compactness of $\partial B_{\rho}(o)$ we can define $\widetilde\gamma$ to be the shortest among all the $\gt_{\rho}-$minimizing geodesics constructed previously connecting $o$ to $\partial B_{\rho}(o)$. We remark that by construction $\widetilde\gamma$ is contained in $B_{\rho}(o)$, since if $\widetilde\gamma$ would escape from  $B_{\rho}(o)$ then there would be at least another point on $\partial B_{\rho}(o)$, thus contradicting the previous construction. Finally, since $u_{\rho}=u$ in $B_{\rho}$ we have that $\widetilde\gamma$ is $\gt-$minimizing (this construction appeared in \cite{fis}).

Let $l$ be the $g-$length of $\widetilde\gamma$. Then $l\geq\rho$ and in order to prove the theorem it is enough to show that 
$$
l\leq C
$$
for some $C=C(n,\alpha,\beta, \gamma,\lambda,k)>0$. 

Let $s$ and $\tilde{s}$ be the arc lengths with respect to the metric $g$ and $\tilde{g}$, respectively. We denote with $R$ and $\tilde{R}$ the curvature tensors of $M$ with respect to $g$ and to $\gt$. We choose a basis $\left\lbrace \tilde{e_1}=\frac{\partial\widetilde\gamma}{\partial\tilde s}, \tilde{e}_2,\dots, \tilde{e}_n \right\rbrace$  orthonormal for the metric $\gt$ such that $\tilde{e}_2,\dots, \tilde{e_n}$ are parallel along $\widetilde\gamma$. The basis $\left\lbrace e_1=\frac{\partial\widetilde\gamma}{\partial s}, e_2=u^k\tilde{e}_2,\dots, e_n=u^k\tilde{e}_n \right\rbrace$ is orthonormal for the metric $g$.  Denote by $R_{11}$ and $\tilde{R}_{11}$ the Ricci curvatures in the direction of $e_1$ for the metric $g$ and $\tilde{g}$, respectively. 

Since $\widetilde\gamma$ is $\gt-$minimizing, by the second variation formula, one has 
\begin{equation}\label{2nd}
\int_{0}^{\tilde{l}}\left[ (n-1)(\varphi_{\tilde{s}})^2 - \tilde{R}_{11}\varphi^2\right]\, d\tilde{s}\geq 0\, ,
\end{equation}
for any smooth function $\varphi$ such that $\varphi(0)=\varphi(\tilde{l})=0$, where $\tilde{l}$ denotes the $\gt$-length of $\widetilde\gamma$.

As proved in \cite[Appendix]{enr}, 
\begin{equation}\label{tR11}
 \tilde{R}_{11}=u^{-2k}\left\lbrace R_{11}- k(n-2)(\ln u)_{ss}- k \frac{\Delta u}{u}+ k\frac{\vert\nabla u\vert^2}{u^2} \right\rbrace \, .
\end{equation}
From \eqref{Ric} we obtain
\begin{equation}\label{R11_1}
R_{11}\geq \alpha \frac{\nabla^2_{11} u}{u} + \beta \frac{u_s^2}{u^2} + \mathcal{Q}_{11}\, ,
\end{equation}
where $\nabla^2_{11} u$ and $\mathcal{Q}_{11}$ denotes the Hessian of $u$ and the tensor $\mathcal Q$ in the direction $e_1$, respectively.  Actually, \eqref{R11_1} can be rewritten in the following way
\begin{equation}\label{R11_2}
R_{11}\geq \alpha\nabla^2_{11} (\ln u)+(\alpha+\beta)(\ln u)_s^2 + \mathcal{Q}_{11}  \, . 
\end{equation}
From \cite[Formula (13)]{enr} we deduce that  
$$
\nabla^2_{11} (\ln u)=\nabla^2 (\ln u)\left(\frac{\partial\widetilde\gamma}{\partial s},\frac{\partial\widetilde\gamma}{\partial s}\right)=(\ln u)_{ss}-\left(\nabla_{\frac{\partial\widetilde\gamma}{\partial s}}\frac{\partial\widetilde\gamma}{\partial s}\right)\ln u=(\ln u)_{ss}-k\vert(\nabla \ln u)^{\perp}\vert^2\, , 
$$
hence, \eqref{R11_2} becomes 
\begin{equation}\label{R11_3}
R_{11}\geq  \alpha(\ln u)_{ss}- \alpha k \vert (\nabla\ln u)^{\perp}\vert^2 +(\alpha+\beta)(\ln u)_s^2 +\mathcal{Q}_{11}\, .
\end{equation}
Plugging \eqref{R11_3} in \eqref{tR11} we have 
$$
 \tilde{R}_{11}\geq  u^{-2k}\left\lbrace \left[\alpha -k(n-2)\right] (\ln u)_{ss}- \alpha k \vert \nabla(\ln u)^{\perp}\vert^2+(\alpha+\beta)(\ln u)_s^2 + \mathcal{Q}_{11} - 
 k \frac{\Delta u}{u}+ k\frac{\vert\nabla u\vert^2}{u^2} \right\rbrace \,. 
$$
Furthermore, from \eqref{u} we have 
\begin{align*}
 \tilde{R}_{11}\geq& u^{-2k}\left\lbrace \left[\alpha -k(n-2)\right] (\ln u)_{ss}- \alpha k \vert \nabla(\ln u)^{\perp}\vert^2+(\alpha+\beta)(\ln u)_s^2 + \mathcal{Q}_{11} +  
 k V+ k(\gamma+1)\frac{\vert\nabla u\vert^2}{u^2} \right\rbrace \\
\geq&u^{-2k}\left\lbrace \left[\alpha -k(n-2)\right] (\ln u)_{ss}- \alpha k \vert \nabla(\ln u)^{\perp}\vert^2+(\alpha+\beta)(\ln u)_s^2 +(n-1)\lambda\right.\\&\qquad\left.+ k(\gamma+1) \left[ (\ln u)_s^2 + \vert \nabla(\ln u)^\perp \vert^2\right] \right\rbrace \\
\geq& u^{-2k}\left\lbrace \left[\alpha -k(n-2)\right] (\ln u)_{ss}+\left[\alpha+\beta+k(\gamma+1)\right](\ln u)_s^2 +(n-1)\lambda \right\rbrace \, , 
\end{align*}
where we used \eqref{F} and \eqref{hp1}. Plugging this information in \eqref{2nd} we get
\begin{align*}
(n-1)\int_{0}^{l}(\varphi_{s})^2u^{-k}\, ds \geq & \left[\alpha -k(n-2)\right] \int_{0}^{l}\varphi^2 u^{-k}(\ln u)_{ss}\, ds \\ 
&+\left[\alpha+\beta+k(\gamma+1)\right] \int_{0}^{l}\varphi^2 u^{-k}(\ln u)_s^2 \, ds  \\
&+ (n-1)\lambda \int_{0}^{l}\varphi^2 u^{-k}\, ds\, ,
\end{align*}
since $l$ is the $g$-length of $\widetilde\gamma$. Integrating by parts we obtain  
\begin{align}\label{2nd_bis}
(n-1)\int_{0}^{l}(\varphi_{s})^2u^{-k}\, ds \geq &-2 \left[\alpha -k(n-2)\right] \int_{0}^{l}\varphi\varphi_s u^{-k-1}u_s\, ds\nonumber \\ 
&+\left[\alpha+\beta+k(\gamma+1+\alpha)-(n-2)k^2\right] \int_{0}^{l}\varphi^2 u^{-k-2}u_s^2 \, ds \nonumber \\
&+ (n-1)\lambda \int_{0}^{l}\varphi^2 u^{-k}\, ds\,, 
\end{align}
 for any smooth function $\varphi$ such that $\varphi(0)=\varphi(l)=0$. By choosing 
 $$
 \varphi=u^{\frac{k}{2}}\psi \, ,
 $$
where $\psi$ is a smooth function such that $\psi(0)=\psi(l)=0$, in \eqref{2nd_bis} we conclude 
\begin{align}\label{2nd_ter}
(n-1)\int_{0}^{l}(\psi_{s})^2\, ds \geq &\left\lbrace-2 \left[\alpha -k(n-2)\right]-k(n-1)\right\rbrace \int_{0}^{l}\psi\psi_s u^{-1}u_s\, ds\nonumber \\ 
&+\left[\alpha+\beta+k(\gamma+1)-(n-1)\frac{k^2}{4}\right] \int_{0}^{l}\psi^2 u^{-2}u_s^2 \, ds \nonumber \\
&+ (n-1)\lambda\int_{0}^{l}\psi^2 \, ds\,.
\end{align}
Since \eqref{hp2} is in force we can use the fact that $a^2+b^2\geq-2ab$, with 
$$
a=\sqrt{\alpha+\beta+k(\gamma+1)-(n-1)\frac{k^2}{4}}\,  \psi \frac{u_s}{u}
$$
and 
$$
\quad b=\frac{-2 \left[\alpha -k(n-2)\right]-k(n-1)}{2\sqrt{\alpha+\beta+k(\gamma+1)-(n-1)\frac{k^2}{4}}}\, \psi_s\, , 
$$
to obtain the following 
\begin{multline*}
\left[\alpha+\beta+k(\gamma+1)-(n-1)\frac{k^2}{4}\right]\psi^2 u^{-2}u_s^2 + \frac{\left\lbrace-2 \left[\alpha -k(n-2)\right]-k(n-1)\right\rbrace}{4\left[\alpha+\beta+k(\gamma+1)-(n-1)\frac{k^2}{4}\right]}\, (\psi_s)^2\\ \geq -\left\lbrace-2 \left[\alpha -k(n-2)\right]-k(n-1)\right\rbrace \psi\psi_s u^{-1}u_s\, . 
\end{multline*}
Hence, \eqref{2nd_ter} can be rewritten in the following way
\begin{equation}\label{eq_fin_BM}
A\int_{0}^{l}(\psi_s)^2 \, ds \geq B \int_0^{l}\psi^2 \, ds \, ,
\end{equation}
where 
$$
A=n-1+\dfrac{\left[2\alpha-k(n-3)\right]^2}{4\left[\alpha+\beta+k(\gamma+1)-(n-1)\frac{k^2}{4}\right]} \quad \text{and} \quad B=(n-1)\lambda\, .
$$  
In particular, from \eqref{eq_fin_BM} we immediately get $BA^{-1}\leq \lambda_1$, where $\lambda_1=\pi^2l^{-2}$ is the first Dirichlet eigenvalue of $d^2/ds^2$ on the interval $[0,l]$. Hence 
\begin{equation}\label{concl_BM}
l \leq \pi\sqrt{\frac{A}{B}}=\pi\sqrt{\frac{1}{\lambda}\left(1+\frac{\left[2\alpha-k(n-3)\right]^2}{4(n-1)\left[\alpha+\beta+k(\gamma+1)-(n-1)\frac{k^2}{4}\right]}\right)}\, . 
\end{equation}
Therefore $(M^n,g)$ must be compact. Moreover, by applying the same strategy to the universal cover of $M^n$, we have that the fundamental group of $M^n$ must be finite. Finally,  if in addition $V\geq 0$ and $\gamma\geq 0$, then integrating \eqref{u} over $M^n$ we obtain $V\equiv 0$ and this concludes the proof of Theorem \ref{teoA}.
\end{proof}

\begin{remark} An alternative end of the previous proof can be performed arguing as follows: integrating by parts \eqref{eq_fin_BM} we obtain 
\begin{equation}\label{final}
\int_{0}^{l} \left(A\psi\psi_{ss} + B\psi^2\right) \, ds\leq 0 \, ,
\end{equation}
for all smooth functions $\psi$ such that $\psi(0)=\psi(l)=0$. By taking 
$$
\psi(s)=\sin\left(\frac{\pi s}{l}\right)\, , \quad \text{ for $s\in [0,l]$}\, , 
$$
in \eqref{final} we get 
$$
\left( B-\dfrac{A\pi^2}{l^2}\right) \int_{0}^{l} \sin^2\left(\frac{\pi s}{l}\right)\, ds \leq 0\, , 
$$
i.e. \eqref{concl_BM} and the conclusion follows.
\end{remark}

\medskip

\begin{proof}[Proof of Corollary \ref{c-cheng}]
Let $(M^n,g)$, $n\geq 2$, be a complete Riemannian manifold with $\mathrm{Ric}\geq -(n-1)g$ and let $u\in C^{\infty}(M)$ be a positive solution of 
$$
-\Delta u \geq \mu\, u
$$
for some $\mu>0$. In the notation of Theorem \ref{teoA}, we have 
$$
\alpha=\beta=\gamma=0,\quad \mathcal{Q}=-(n-1)g, \quad V=\mu.
$$ 
Therefore \eqref{F}-\eqref{hp1}-\eqref{hp2} read as
\begin{equation}\label{eqeq}
-(n-1)+k\mu\geq (n-1)\lambda,\quad k\geq 0,\quad k\left(1-\frac{n-1}{4}k\right)>0
\end{equation}
for some $\lambda>0$. By contradiction, suppose that $$\mu>\frac{(n-1)^2}{4}.$$ Taking $k=\frac{4}{n-1}-\eps$ for some $\eps>0$ small enough, we have that the conditions \eqref{eqeq} are satisfied and therefore $M^n$ must be closed. This contradicts the fact that $u>0$ and satisfies $-\Delta u \geq \mu\, u$.
\end{proof}

\

\section{Application to General Relativity} \label{proof}


In this section we prove Theorem \ref{t-main} and Theorem \ref{t-main2} together with Corollary \ref{cor}. We need the following

\begin{lemma}\label{l-dec} Let $(X^4,\gamma)=(I\times M^3, -N^2 dt^2+g)$ be a globally hyperbolic spacetime. Then  

\begin{itemize}

\item for all tangent vector $V$ one has
$$
\frac{1}{2N^2}\left(\partial^2_{tt} g_{ij} V^i V^j+2h_{ij}V^i V^j\partial_t N\right)\leq [1-q(t)]\frac{|h_{ij}V^i V^j|^2}{g(V,V)}\qquad \forall t\in I\, ;
$$

\item $M_{t_0}$ is {\em energy--decelerating} in all directions if and only if
$$
\partial^2_{tt} g_{ij} +2h_{ij}\partial_t N\leq 0\, ,
$$
in the sense of quadratic forms.
\end{itemize}
\end{lemma}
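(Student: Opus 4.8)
The plan is to reduce both assertions to one elementary computation with the operator $\op=\tfrac1N\partial_t$, which is a derivation in $t$. Fix $x\in M$ and a vector $V\in T_xM$, regarding its coordinate components $V^i$ as constants in $t$, so that $|V|^2=g_{ij}(t,x)V^iV^j$ is a function of $t$ alone. From \eqref{fond} one has $\op|V|^2=\tfrac1N(\partial_t g_{ij})V^iV^j=-2h_{ij}V^iV^j$. Applying $\op=\tfrac1N\partial_t$ once more, $\op^2|V|^2=\tfrac{1}{N^2}\partial^2_{tt}|V|^2-\tfrac{\partial_tN}{N^2}\op|V|^2$; substituting $\partial^2_{tt}|V|^2=\partial^2_{tt}g_{ij}V^iV^j$ and $\op|V|^2=-2h_{ij}V^iV^j$ gives
$$
\op^2|V|^2=\frac{1}{N^2}\Big(\partial^2_{tt}g_{ij}V^iV^j+2h_{ij}V^iV^j\,\partial_tN\Big).
$$
Thus the left-hand side of the first inequality is exactly $\tfrac12\op^2|V|^2$, and the integrand in the definition of ``energy--decelerating'' is $\op^2|\dot\sigma|^2=\tfrac1{N^2}\big(\partial^2_{tt}g_{ij}+2h_{ij}\partial_tN\big)\dot\sigma^i\dot\sigma^j$.

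For the first item I would apply the product rule for the derivation $\op$ to $|V|^2=|V|\cdot|V|$ (valid since $|V|>0$ for $V\neq0$, the case $V=0$ being trivial), obtaining $\tfrac12\op^2|V|^2=(\op|V|)^2+|V|\,\op^2|V|$. The definition of the deceleration parameter gives $|V|\,\op^2|V|\leq -q(t)\,(\op|V|)^2$, hence $\tfrac12\op^2|V|^2\leq[1-q(t)]\,(\op|V|)^2$. Finally, from $\op|V|^2=2|V|\,\op|V|$ and $\op|V|^2=-2h_{ij}V^iV^j$ one gets $\op|V|=-h_{ij}V^iV^j/|V|$, so $(\op|V|)^2=|h_{ij}V^iV^j|^2/g(V,V)$, which is the asserted right-hand side.

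For the second item, substituting $V=\dot\sigma(s)$ into the displayed formula gives $\op^2E(\sigma)\big|_{t_0}=\int_\sigma\tfrac1{N^2}\big(\partial^2_{tt}g_{ij}+2h_{ij}\partial_tN\big)\dot\sigma^i\dot\sigma^j\big|_{t_0}$. If the quadratic form $\partial^2_{tt}g_{ij}+2h_{ij}\partial_tN$ is $\leq0$ on $M_{t_0}$, the integrand is pointwise $\leq0$, so $M_{t_0}$ is energy--decelerating. Conversely, if this form were positive at some $x_0\in M$ in some direction $V_0$, then by continuity of $g$, $N$ and their $t$-derivatives it would stay positive in a coordinate neighbourhood of $x_0$ for velocities near $V_0$; choosing a short coordinate segment $\sigma$ through $x_0$ with tangent close to $V_0$ inside that neighbourhood would make $\op^2E(\sigma)\big|_{t_0}>0$, contradicting the hypothesis. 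Hence the two conditions are equivalent.

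The argument is essentially bookkeeping with $\op$, so I do not expect a real obstacle. The only point needing some care is the ``only if'' direction of the second item --- passing from an integral inequality valid for all curves to a pointwise inequality --- which the localization/continuity argument above settles; throughout one must also keep track of the distinction between $\partial_t$ and $\op=\tfrac1N\partial_t$, and of the $\partial_tN$ terms this produces, since $N$ need not be constant.
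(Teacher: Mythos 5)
Your proposal is correct and follows essentially the same route as the paper: a direct computation with $\op=\tfrac1N\partial_t$ and \eqref{fond} combined with the defining inequality of $q(t)$ for the first item (you differentiate $|V|^2$ and use the product rule where the paper differentiates $\sqrt{g(V,V)}$ directly, which is the same calculation repackaged), and the pointwise-versus-integral equivalence for the second item. Your spelled-out localization argument is exactly what the paper compresses into its ``by continuity'' remark, so no substantive difference remains.
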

\begin{proof}
By using \eqref{fond} we obtain
\begin{align*}
\overline{\partial}^2_{tt} \sqrt{g(V,V)} &= \overline{\partial}^2_{tt} \sqrt{g_{ij}V^i V^j} = \frac{1}{N} \partial_t \left(\op \sqrt{g_{ij}V^i V^j}\right)\\
&=\frac{1}{N} \partial_t \left[\frac{\partial_t g_{ij}V^i V^j}{2 \sqrt{g(V,V)} N}\right]\\
&= \frac{\partial^2_{tt} g_{ij} V^i V^j}{2 \sqrt{g(V,V)} N^2}-\frac{\partial_t g_{ij}V^i V^j\partial_t N}{2\sqrt{g(V,V)}N^3}-\frac{|\partial_t g_{ij}V^i V^j|^2}{4g(V,V)^{\frac32}N^2}\\
&= \frac{\partial^2_{tt} g_{ij} V^i V^j}{2 \sqrt{g(V,V)} N^2}+\frac{h_{ij}V^i V^j\partial_t N}{\sqrt{g(V,V)}N^2}-\frac{|h_{ij}V^i V^j|^2}{g(V,V)^{\frac32}}\, .
\end{align*}
Thus from the definition of the deceleration parameter we have, for all $V$,
$$
\overline{\partial}^2_{tt} \sqrt{g(V,V)}\leq -q(t) \frac{\left(\overline{\partial}_{t} \sqrt{g(V,V)}\right)^2}{\sqrt{g(V,V)}}=-q(t)\frac{|\partial_t g_{ij}V^i V^j|^2}{4g(V,V)^{\frac32}N^2}=-q(t)\frac{|h_{ij}V^i V^j|^2}{g(V,V)^{\frac32}}\,, 
$$
and therefore, from the definition of $\op$,
$$
\frac{1}{2N^2}\left(\partial^2_{tt} g_{ij} V^i V^j+2h_{ij}V^i V^j\partial_t N\right)\leq [1-q(t)]\frac{|h_{ij}V^i V^j|^2}{g(V,V)}\, ,
$$
where we used \eqref{fond}.

\medskip

\noindent To prove the second estimate, we observe that, by continuity, the condition 
$$
\overline{\partial}^2_{tt} \,\mathrm{E}(\sigma)_{\left.\right|_{t_0}}  \leq 0\quad\forall \sigma:J\subset\RR\to M^3
$$
is equivalent to 
$$
\overline{\partial}^2_{tt} g_{ij}\,_{\left.\right|_{(t_0,x)}} \leq 0
$$
in the sense of quadratic forms. A computation similar to the one above gives the result.
\end{proof}

\begin{proof}[Proof of Theorem \ref{t-main} and Theorem \ref{t-main2}]
By classical formulas given by the immersion of $M^3\hookrightarrow X^4$ (see e.g. \cite{c} and also \cite[Formula (3.4)]{bc}), the second fundamental form, the lapse function $N$ and the Ricci curvature of $(M^3,g)$ are related to the Riemann tensor of $(X^4,\gamma)$. More precisely, the following equations holds 
$$
\begin{cases}
N\left(\T_{ij}-\frac12\T  g_{ij}\right) = N R_{ij} -\partial_t h_{ij} +N H h_{ij}-2N h_{il}h_{jl} -\nabla^2_{ij} N\\
N\left(\T_{\nu\nu}+\frac12\T\right)  = \partial_t H- N |h|^2+\Delta N\,,
\end{cases}
$$
i.e. 
\begin{equation}
\begin{cases}
R_{ij} =\frac{\nabla^2_{ij}N}{N} + \frac{\partial_t h_{ij}}{N} + 2 h_{il}h_{jl} - Hh_{ij}+ \T_{ij}-\frac12\T  g_{ij}   \\
-\Delta N =\left( \frac{\partial_t H}{N} - |h|^2 - \T_{\nu\nu}-\frac12\T\right)N\, .
\end{cases}
\end{equation}
In the notations of Theorem \ref{teoA} we have 
$$
n=3 \, , \quad \alpha=1 \, , \quad \beta=\gamma=0 \, ,
$$
and 
$$
u=N\,,\quad \mathcal{Q}_{ij}= \frac{\partial_t h_{ij}}{N} + 2 h_{il}h_{jl} - Hh_{ij}+ \T_{ij}-\frac12\T  g_{ij} \,, \quad V=\frac{\partial_t H}{N} - |h|^2 - \T_{\nu\nu}-\frac12\T\, . 
$$
To prove the closure result it is sufficient to show that \eqref{F}-\eqref{hp1}-\eqref{hp2} hold true. First of all \eqref{hp1} holds immediately. Secondly, in order to fulfill also \eqref{hp2} we need 
\begin{equation}\label{k_cond}
1-\sqrt{3}<k<1+\sqrt 3.
\end{equation}
Finally, concerning \eqref{F} we observe the following: by differentiating \eqref{fond} with respect to $t$ we get 
\begin{equation*}
\partial^2_t g_{ij}=-2\partial_t N h_{ij}-2N\partial_t h_{ij}\,,
\end{equation*}
i.e. 
\begin{equation}\label{h_t}
\dfrac{\partial_t h_{ii}}{N}=-\dfrac{\partial^2_t g_{ii}}{2N^2}-\frac{\partial_t N h_{ii}}{N^2}\,. 
\end{equation}
While, tracing \eqref{fond} we get 
\begin{equation}\label{fond_tr}
g^{ij}\partial_t g_{ij}=-2N g^{ij}h_{ij}=-2N H\, ,
\end{equation}
and differentiating \eqref{fond_tr} with respect to $t$, and using 
\begin{equation*}
\partial_t g^{ij}=2N h^{ij}\,,
\end{equation*}
which clearly follows from \eqref{fond}, yields
\begin{equation*}
-4N^2\vert h\vert^2  + g^{ij}\partial_t^2 g_{ij}=-2\partial_t N H - 2N\partial_t H
\end{equation*}
i.e. 
\begin{equation}\label{H_t}
\dfrac{\partial_t H}{N}=2\vert h\vert^2-\dfrac{g^{ij}\partial_t^2g_{ij}}{2N^2}-\dfrac{\partial_tN H}{N^2}\, . 
\end{equation}
Now we can deal with \eqref{F}: for every $i=1,2,3$
\begin{align*}
Q_{ii}+ k V=& \frac{\partial_t h_{ii}}{N} + 2\sum_j \vert h_{ij}\vert^2 - Hh_{ii}+ \T_{ii}-\frac12\T  + k\frac{\partial_t H}{N}-k\vert h\vert^2  -k \T_{\nu\nu}-\frac k2\T \\
 =&-\dfrac{1}{2N^2}\left(\partial^2_t g_{ii}+ kg^{ij}\partial^2_t g_{ij} \right)
 - \frac{\partial_t N}{N^2}\left( h_{ii}+ kH \right) +2\sum_j\vert h_{ij}\vert^2 - Hh_{ii}\\ 
 &\quad +\T_{ii}-\frac{1+k}{2}\T +k\vert h\vert^2-k \T_{\nu\nu}\\
 =&-\dfrac{1}{2N^2}\left(\partial^2_t g_{ii}+2\partial_t N\,h_{ii}\right)-\frac{k}{2N^2}\left( g^{ij}\partial^2_t g_{ij} +2\partial_t N\,H\right)\\
 &\quad+2\sum_j\vert h_{ij}\vert^2 - Hh_{ii}+k\vert h\vert^2\\ 
 &\quad +\T_{ii}-k \T_{\nu\nu}-\frac{1+k}{2}\T \,, 
\end{align*}
where we used \eqref{h_t} and \eqref{H_t}. We diagonalize $h$ and we denote by $\lambda_j$, $j=1,2,3$ its eigenvalues. 

\medskip

\noindent {\em Proof of Theorem \ref{t-main}}: by using the assumption $a)$, Lemma \ref{l-dec} and taking $k\geq 0$, we have
\begin{align*}
-\dfrac{1}{2N^2}\left(\partial^2_t g_{ii}+2\partial_t N\,h_{ii}\right)-\frac{k}{2N^2}\left( g^{ij}\partial^2_t g_{ij} +2\partial_t N\,H\right) &\geq -(1-q)\lambda_i^2-(1-q)k\sum_i \lambda_i^2 \\
&=-(1-q)\lambda_i^2-(1-q)k|h|^2\,.
\end{align*}
Here and in what follows $q=q(t_0)$. Therefore,
\begin{align*}
Q_{ii}+ k V&\geq -(1-q)\lambda_i^2-(1-q)k|h|^2+2\sum_j\vert h_{ij}\vert^2 - Hh_{ii}+k\vert h\vert^2+\T_{ii}-k \T_{\nu\nu}-\frac{1+k}{2}\T \\
&=-(1-q)\lambda_i^2+qk\vert h\vert^2+2\lambda_i^2 - H\lambda_i+\T_{ii}-k \T_{\nu\nu}-\frac{1+k}{2}\T\\
&=(1+q)\lambda_i^2+qk\vert h\vert^2-H\lambda_i+\T_{ii}-k \T_{\nu\nu}-\frac{1+k}{2}\T \, . 
\end{align*}
Now denoting by $\mu_j$, $j=1,2,3$ the eigenvalues of the traceless second fundamental form (and by $e_j$ the corresponding eigenvectors)
$$
\mathring{h}:=h-\frac{H}{3}g \, ,
$$
we have 
\begin{align*}
Q_{ii}+ k V&\geq(1+q)\lambda_i^2+qk\vert h\vert^2-H\lambda_i+\T_{ii}-k \T_{\nu\nu}-\frac{1+k}{2}\T  \\
&= (1+q)\left(\mu_i+\frac{H}{3}\right)^2+qk\vert \mathring{h}\vert^2+\frac{qk}{3}H^2-H\left(\mu_i+\frac{H}{3}\right)+\T_{ii}-k \T_{\nu\nu}-\frac{1+k}{2}\T\\
&\geq \frac{2+2q+3qk}{2}\mu_i^2+\frac{q+3qk-2}{9}H^2+\frac{2q-1}{3}H\mu_i+\T_{ii}-k \T_{\nu\nu}-\frac{1+k}{2}\T\, , 
\end{align*}
for all 
\begin{equation}\label{k_cond_bis}
0\leq k<1+\sqrt 3,
\end{equation}
where we used the fact that   
$$
\vert \mathring{h}\vert^2\geq\frac{3}{2}\mu_i^2\, ,
$$
which holds true, being $\mathring{h}$ trace free, and the fact that $q>1/2>0$. We choose $q=\frac{1}{2}+\eps$, for some $\eps>0$ and $k=1$ (which is coherent with \eqref{k_cond_bis}) in the previous estimate to obtain
\begin{align*}
Q_{ii}+ k V&\geq \left(\frac94+\frac52\eps\right)\mu_i^2 +\frac{4\eps}{9}H^2+\frac{2\eps}{3}H\mu_i+\T_{ii}- \T_{\nu\nu}-\T\\
&\geq \left(\frac94+\frac52\eps-\frac{\eps\theta}{3}\right)\mu_i^2 +\left(\frac{4\eps}{9}-\frac{\eps}{3\theta}\right)H^2+\T_{ii}- \T_{\nu\nu}-\T \\
&= \frac{4\eps(1+\eps)}{9+10\eps}H^2 +\T_{ii}- \T_{\nu\nu}-\T
\end{align*}
for all $i=1,2,3$, where we used Young's inequality with $\theta=\frac{3(9+10\eps)}{4\eps}$. In particular, given a general local orthonormal frame $\{E_i\}$, $i=1,2,3$, on the tangent space of $M^3$ we have
$$
Q_{ii}=Q(e_i,e_i)=c_i^2 Q(E_i,E_i)
$$
for some $c_i\in\RR$ and thus the previous estimate holds for every orthonormal frame $\{E_i\}$. Moreover, choosing the frame of eigenvectors of the stress-energy tensor $T$, we have
$$
\T_{ii}- \T_{\nu\nu}-\T = \T_{ii}-\T_{\nu\nu}-(\T_{ii}+\T_{jj}+\T_{kk}-\T_{\nu\nu})=-\T_{jj}-\T_{kk}=2\Lambda-p_j-p_k
$$
for all $i\neq j\neq k$. Thus
$$
Q_{ii}+ k V\geq \frac{4\eps(1+\eps)}{9+10\eps}H^2+2\Lambda - 2\mathcal{P}(t_0)\geq (n-1)\lambda:=\frac{4\eps(1+\eps)}{9+10\eps} \mathcal{H}({t_0})^2>0,
$$
thanks to $b)$ and $c)$. Hence, we conclude that \eqref{F} holds and so Theorem \ref{teoA} implies that $M^3$ is compact. Computing $A$ and $B$ defined in the proof of Theorem \ref{teoA}, we get
$$
A=\frac83\quad\text{and}\quad B=(n-1)\lambda=\frac{(2q-1)(2q+1)}{10q+4}\mathcal{H}({t_0})^2,
$$
therefore
$$
\mathrm{diam}(M^3,g_{\left.\right|_{t_0}})\leq \frac{\pi}{\mathcal{H}(t_0)}\sqrt{\frac{8(10q+4)}{3(2q-1)(2q+1)}}\,.
$$
Moreover, using the uniformization of closed three-dimensional manifolds prove in \cite{per}, we conclude that $M^3$ must be diffeomorphic to a quotient of $\SS^3$. This concludes the proof of Theorem \ref{t-main}.

\qed

\medskip

\noindent {\em Proof of Theorem \ref{t-main2}}: by using the assumption that $M_{t_0}$ is {\em energy--decelerating} in all directions $a)$, Lemma \ref{l-dec} and taking $k\geq 0$, we have
$$
-\dfrac{1}{2N^2}\left(\partial^2_t g_{ii}+2\partial_t N\,h_{ii}\right)-\frac{k}{2N^2}\left( g^{ij}\partial^2_t g_{ij} +2\partial_t N\,H\right) \geq 0.
$$
Therefore,
\begin{align*}
Q_{ii}+ k V&\geq 2\sum_j\vert h_{ij}\vert^2 - Hh_{ii}+k\vert h\vert^2+\T_{ii}-k \T_{\nu\nu}-\frac{1+k}{2}\T \\
&=2\lambda_i^2 - H\lambda_i+k|h|^2+\T_{ii}-k \T_{\nu\nu}-\frac{1+k}{2}\T\\
&= 2\left(\mu_i+\frac{H}{3}\right)^2-H\left(\mu_i+\frac{H}{3}\right)+k\vert \mathring{h}\vert^2+\frac{k}{3}H^2+\T_{ii}-k \T_{\nu\nu}-\frac{1+k}{2}\T\\
&\geq \left(2+\frac{3k}{2}\right)\mu_i^2+\frac{3k-1}{9}H^2+H\mu_i+\T_{ii}-k \T_{\nu\nu}-\frac{1+k}{2}\T\, , 
\end{align*}
where we used the fact that $\vert \mathring{h}\vert^2\geq\frac 32\mu_i^2$. Arguing as before with Young's inequality and using $b)$, we obtain
$$
Q_{ii}+ k V \geq \frac{18k^2+18k-17}{18(4+3k)}H^2+\T_{ii}-k \T_{\nu\nu}-\frac{1+k}{2}\T \geq \frac{18k^2+18k-17}{18(4+3k)}\mathcal{H}({t_0})^2
$$
for all $i=1,2,3$, if $k> \frac{\sqrt{43}-3}{6}\simeq 0.59$. Thanks to $c)$ , this quantity is uniformly positive. Hence, we conclude that \eqref{F} holds and so Theorem \ref{teoA} implies that $M^3$ is compact. Computing $A$ and $B$ defined in the proof of Theorem \ref{teoA}, we get
$$
A=\frac{2(3+2k-k^2)}{2+2k-k^2}\quad\text{and}\quad B=(n-1)\lambda=\frac{18k^2+18k-17}{18(4+3k)}\mathcal{H}({t_0})^2,
$$
therefore
$$
\mathrm{diam}(M^3,g_{\left.\right|_{t_0}})\leq \frac{6\pi}{\mathcal{H}(t_0)}\sqrt{\frac{(3+2k-k^2)(4+3k)}{(2+2k-k^2)(18k^2+18k-17)}}\,.
$$
Moreover, as before, we conclude that $M^3$ must be diffeomorphic to a quotient of $\SS^3$. This concludes the proof of Theorem \ref{t-main2}.

\end{proof}

\begin{proof}[Proof of Corollary \ref{cor}] 
Consider a perfect fluid spacetime with $q(t_0)>1/2$, $p(t_0)\leq \Lambda$ and $\mathcal{H}(t_0)>0$. Since $p(t,x)=p(t)=\mathcal{P}(t)$ as already observed in the introduction the conclusion follows from Theorem \ref{t-main}.
\end{proof}
\medskip

%
%

\

\subsection*{Acknowledgements} 
The first author is members of the GNSAGA, Gruppo Nazionale per le Strutture Algebriche, Geometriche e le loro Applicazioni of INdAM. The second author is member of GNAMPA, Gruppo Nazionale per l'Analisi Matematica, la Probabilit\`a e le loro Applicazioni of INdAM.

\medskip 
%
%
%
%
%

%
%

\

\

\

\end{document}